\newcommand{\N}{\mathbb{N}}
\newcommand{\R}{\mathbb{R}}
\newcommand{\Part}{\mathcal{P}}
\newcommand{\set}[1]{\left\{#1\right\}}% Set
\newcommand{\Id}{\text{Id}}
\newtheorem{theorem}{Theorem}[section]
\newtheorem*{theorem*}{Goal}
\newtheorem{lemma}[theorem]{Lemma}
\newtheorem{corollary}[theorem]{Corollary}
\theoremstyle{definition}
\newtheorem{exemple}[theorem]{Example}
\newtheorem{exemples}[theorem]{Examples}
\newtheorem{definition}[theorem]{Definition}
\theoremstyle{remark}
\newtheorem{remark}[theorem]{Remark}
\newtheorem{remarks}[theorem]{Remarks}
\newenvironment{tfae}
{
\begin{enumerate}}
{\end{enumerate}}
\title[A categorical view on the converse Lyapunov theorem]{A categorical view on\\ the converse Lyapunov theorem}
\author{Sébastien Mattenet, Raphaël Jungers }
\address[Mattenet Sébastien]{Mathematical engineering department, Av.\ Georges Lemaitre 4, 1348 Ottignies-Louvain-la-Neuve, Belgium}
\email{sebastien.mattenet@uclouvain.be}
\begin{document}

\begin{abstract}
In 1892, Lyapunov provided a fundamental contribution to stability theory by introducing so-called \emph{Lyapunov functions} and \emph{Lyapunov equilibria}. He subsequently showed that, for linear systems, the two concepts are equivalent. These concepts have since been extended to diverse types of dynamical systems, and in all settings the equivalence remains valid. However, this involves an often technical proof in each new setting where the concepts are introduced. In this article, we investigate a categorical framework where these results can be unified, exposing a single underlying reason for the equivalence to hold in all cases. First we  define what is a dynamical system. Then we introduce the notion of a level-set morphism, which in turn allows us to define the concepts of a Lyapunov equilibrium and a Lyapunov function in a categorical setting. We conclude by a proof of their equivalence.
\end{abstract}

\subjclass[2020]{Primary 18A05, 18C40} 

\keywords{Lyapunov theory, Lyapunov converse theorem, monovariant, dynamical systems, category theory, topos}

\maketitle

\section{Introduction}
Lyapunov theory is a cornerstone of the study of dynamical systems. Lyapunov's realisation in his Ph.D.\ thesis~\cite{Lyapunov} was that one could infer stability of a physical system without knowing the true energy function. Instead, it is enough to find a so-called Lyapunov function. Surprisingly, finding a Lyapunov function is also a necessary condition for having a (Lyapunov) equilibrium. 
The notion of Lyapunov equilibrium (and Lyapunov stability) has since then been expanded to many different systems, with always as the key result that having a Lyapunov function (or lack thereof) is equivalent to proving  an equilibrium (or lack thereof).
See \cite{bacciotti2005liapunov}*{Theorem 2.5} or \cite{khalil_nonlinear_2002}*{Theorem 4.17} for classical Lyapunov theorems. 
While one quickly develops an intuition for the translation between Lyapunov equilibria and functions, the proofs themselves are quite technical and seem to require a machinery too complex and too specific, hindering the view of a general proof holding for all types of dynamical systems at once. (To illustrate this point: the proof of \cite{khalil_nonlinear_2002}*{Theorem 4.17} is six pages long.) 
The goal of this paper is 1) to write in categorical language the definitions of equilibrium and Lyapunov function, 2) to make precise the equivalence between the two notions and 3) to formalise how change, codified in one setting, translates to change, codified in the other.

Related work includes \cite{ames2006categorical}, which provides another categorical definition of dynamical system, but does not study Lyapunov theory.

\section{Dynamical systems}
Dynamical systems are systems that evolve over time. A dynamical system is modelled as an action of time on some set of states. Such a system is \textbf{deterministic} when repeating the same action from a given state will always yield the same resulting state. A dynamical system is \textbf{closed} if through evolution of time one remains inside of the state-space.
Let us formally define a dynamical system as an action of a monoid on a metric space. Let $M$ be a space and $T$ a monoid, then a \emph{deterministic closed} dynamical system is an action of $T$ on $M$, that is a functor $F\colon T\to End(M)$.
\begin{exemples}
\begin{enumerate}
    \item Let $M=T=\R$ and pick a $v_0\in \R$. Let $F_t(x)=x+v_0\cdot t$; this is a \textbf{uniform linear movement}.
    \item Let $M=\R^n$, and $T=(\N,+)$. Let $A\in \R^{n\times n}$ be a matrix. Let the evolution functor be defined by $F_1 \colon M\to M \colon  x\in M\mapsto Ax$. ($F_n=\left.F_1\right.^n$ is imposed by composition in the monoid). This is a \textbf{linear discrete time system}.
    \item Let $M=\R^n$ and $T=\set{1,\dots,n}^*$ the set of words on an alphabet of $n$ letters; the operation on $T$ is the concatenation of words. Let $A_i\in \R^{n\times n},\quad i=1,\dots, n$ be matrices. Let the evolution functor be defined by
	\[
		F_i \colon M\to M \colon  x\mapsto A_ix
		\]
		and composition. This is a \textbf{switching system}.
    \item Let $M=\R^n$, and $T=\set{\perp,\top}^*$ with concatenation. Let $A\in \R^{n\times n}$ be a matrix and $b\in \R^n$ be a vector. Let the evolution functor be defined by $F_\perp \colon M\to M \colon  x\mapsto Ax$ and $F_\top\colon M\to M \colon  x\mapsto Ax+b$. $F$ is defined on any other word by composition in the monoid. This is a \textbf{linear discrete time system with discrete control} $b$.
    \item Let $M=S^n$ be the n-sphere and $T=O(n+1)$ the group of distance preserving transformation on $\R^{n+1}$. The action is given by applying the transformation to the embedded sphere.
\end{enumerate}
\end{exemples}
One can extend the definition to also include \textit{open} systems and \textit{stochastic} systems, see for example \cite{smithe2022open}. Other definitions based on derivations are possible; see for example \cite{ames2006stability}. We chose our definition because we are interested in modelling Lyapunov's second method for stability, extracting the categorically relevant concepts.
\section{Invariants}
When considering the evolution of a system, one could consider invariants of the system. By definition an invariant is a quantity $I\colon M\to X$ which remains the same after the passage of time, therefore it forms a cocone over the diagram of the action. The triangle
\[\begin{tikzcd}
	M && M \\
	& X
	\arrow["{F_t}", from=1-1, to=1-3]
	\arrow["I"', from=1-1, to=2-2]
	\arrow["I", from=1-3, to=2-2]
\end{tikzcd}\]
commutes for every $t\in T$, and for $F\colon T\to End(M)$ we write $F(t)=F_t\colon M\to M$.
Knowledge about its invariants allows us to determine properties of the system, and more generally to decompose it into equivalence classes. That is, one can look at $X\slash _\sim $ where $ m\sim m' \iff I(m)=I(m')$ and deduce properties of reachability if this space does (or does not) contract to a single point. %The problem with this approach categorically is that convergence and divergence look alike, in that $F^{op}=T^{op}\to M$ has the same diagram but the interpretation is the polar opposite.

Now suppose one wants to define a monovariant, some quantity about the system which either only increases, or only decreases, such as entropy or energy in physical systems. For that we would need to select only one direction of time: i.e., once some time interval has passed, it is no longer possible to go backwards in time. We define 
\begin{definition}\label{def:timeline}
A timeline $(T,\leq,0,+)$ is a set $T$, equipped with a partial order $\leq$, and an operation $+$ that respects for all $t_1$, $t_2$, $t_3\in T$,
\begin{enumerate}
    \item $t_1+0=t_1=0+t_1$,
    \item $t_1+(t_2+t_3)=(t_1+t_2)+t_3$,
    \item $t_1\leq t_1+t_2$, 
    \item if $t_1\leq t_2$, then there exists a unique $t'\in T$ such that $t_2=t_1+t'$.\label{cond4}
\end{enumerate}
\end{definition}

This definition allows for some flexibility in the timeline (discrete time or not, branching time, etc.) while still permitting an unambiguous direction of time.

\begin{remarks}
	\begin{enumerate}
    \item Note that in particular, a timeline is a monoid $(T,+,0)$.
    \item From 1.\ and 3.\ it follows that $0\leq 0+t=t$ for any time $t$, therefore $0$ is the global minimum for the ordering. This means our timeline has a starting point and does not go indefinitely into the past.
\end{enumerate}
\end{remarks} 

\begin{exemples}
\begin{itemize}
    \item $T= \N$ with the usual order and sum. This is the \textbf{discrete linear timeline}.
    \item $T=\R^+$ with the usual order and sum. This is the \textbf{continuous linear timeline}.
    \item Let $A$ be a set, then $A^*$, the set of words over the alphabet $A$, also called the free monoid over $A$, can be equipped with the operation $\circ$ of concatenation, and the canonical ordering $w_1\leq w_2\iff \exists w_3\colon w_2=w_1\circ w_3$. This is a \textbf{discrete branching timeline}, used for example in arbitrary switching systems. Note that composition is not commutative, for example $T=\set{a,b}^*$, $ab+bab=abbab\neq bab+ab=babab$.
    \item Let $T= \set{f\colon [0,a]\to \R\mid a>0}$. We can define the ordering $f_1\leq f_2$ if $f_1=f_2$ on the domain of $f_1$, that is, if $f_2$ is an extension of $f_1$. Additionally one can define the concatenation operation 
   \begin{equation*}
        f_1\circ f_2=
        \begin{cases}
        f_1(x) &x\in [0,t_1] \\
        f_2(x-t_1)& x\in [t_1,t_1+t_2]
    \end{cases}
   \end{equation*}
   This is an example of a continuous branching timeline.
   \item Let T be a total order, bounded from below. We define the composition as the maximum, that is $x+y = \max(x,y)$. This is a very limited idea of timeline, so we won't use it in examples.
   \item Any ordered monoid $B$ (called a \textit{minimal positively ordered monoid} in \cite{wehrung1992injective}) is a valid timeline.
\end{itemize}
\end{exemples}

In a dynamical system $e\colon T\to End(M)$, if $T$ also is a timeline $(T,\leq,0,+)$, then we will call the system a \textbf{forward dynamical system}, to indicate it cannot go backward in time, because 0 is a global minimum.

We may now define a monovariant as a special kind of lax cocone.
\begin{definition}
Let $T$ be a timeline, $F\colon T\to M$ a forward dynamical system, $X$ a poset and $I\colon M\to X$ a function. We say $I$ is a \textbf{monovariant} if it is a lax cocone over $F$. In other words, the following diagram lax-commutes for all $t\in T$:
\[\begin{tikzcd}
	M && M \\
	& X
	\arrow["{F_t}", from=1-1, to=1-3]
	\arrow["I"', from=1-1, to=2-2,""{name=0,anchor=center}]
	\arrow["I", from=1-3, to=2-2,""{name=1, anchor=center}]
    \arrow[from=1, to=0, Rightarrow]
\end{tikzcd}\]
This means that $I(m)\leq I(F_t(m))$ for all $m\in M$.
\end{definition}

A particular example of interest will be the case when $I$ is the distance function $d(x,\cdot)$ with respect to a point $x$. When that happens, the interpretation is that the centre (the point where $I(x)=0$) is attractive.

\begin{exemples}
\begin{itemize}
    \item Let $M=\R$ evolve under discrete linear time with the following dynamic: $F(x)=\frac{x}{2}$. One can look at the distance to the origin and see that it is decreasing at each time step. 
    \item Same as before, but let $F(x)=\begin{cases}
        \frac{x}{2} & |x|\geq 2\\
        x & |x|< 2
    \end{cases}.$
    \item Let $M=\R$ evolve under discrete linear time with the dynamic $F(x)=x-1$ and consider the monovariant $I(x)=x$. One can see that it decreases at each time step.
\end{itemize}
\end{exemples}

Monovariants are useful to quickly rule out that a given region of $M$ is reachable from a given starting point. As a demonstration, in all of the above examples we know that from a starting point $x= 3$ or $x=-3$ we will never reach $10$ thanks to monovariance. 

We now define the following: 
\begin{definition}
Given a point $1\to M$ (or a subset $A\hookrightarrow M$), we call its image through $F$ the \textbf{future} of the point (or subset).
\end{definition}

\begin{definition}
We say a point $X$ (or set) is an \textbf{equilibrium} if it is its own future, that is $F(X)=X$.
\end{definition}

\begin{definition}
We define an \textbf{attractor} as a point for whom the distance is a monovariant.
\end{definition}

\begin{theorem}
    An attractor is an equilibrium.
\end{theorem}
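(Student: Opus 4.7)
The plan is to test the monovariant condition on the attractor itself. Let $X\in M$ be an attractor, so by definition the function $I=d(X,\cdot)\colon M\to \R_{\geq 0}$ is a monovariant for $F\colon T\to End(M)$. Applying the lax cocone inequality at the single point $m=X$ will produce a comparison between $d(X,F_t(X))$ and $d(X,X)=0$ for every $t\in T$, which is all the proof needs.

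Because a metric takes non-negative values and separates points, this single comparison pins down $d(X,F_t(X))=0$, hence $F_t(X)=X$, for every $t\in T$. Consequently the future of $X$, i.e.\ its image $\set{F_t(X)\mid t\in T}$ under $F$, collapses to the singleton $\set{X}$, which is precisely the defining condition $F(X)=X$ for $X$ to be an equilibrium.

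The only thing that requires care is the orientation of the lax 2-cell in the monovariant diagram: the argument uses that the inequality orients so that $0=d(X,X)$ is an \emph{upper} bound on $d(X,F_t(X))$, in line with the direction drawn in the monovariance square and with the guiding examples where distance to an attractor decreases along trajectories. Once this bookkeeping is settled, the whole proof is a one-line substitution, which is itself the paper's advertised payoff: casting attractors and equilibria as lax cocones and strict cocones over the action functor reduces the converse Lyapunov implication to evaluating a 2-cell at a distinguished point.
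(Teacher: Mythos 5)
Your proof is correct and is essentially the paper's own argument: evaluate the monovariance inequality of $d(x^*,\cdot)$ at the point $x^*$ itself, obtain $d(x^*,F_t(x^*))\leq d(x^*,x^*)=0$, and conclude $F_t(x^*)=x^*$ from non-negativity and point-separation of the metric. Your remark about the orientation of the lax 2-cell is well taken --- the paper's inline formula $I(m)\leq I(F_t(m))$ contradicts its own drawn 2-cell, examples, and this very proof, and you resolve it the same way the paper implicitly does.
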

\begin{proof}
    Let's call this point $x^*$. We measure the distance of its future to itself. By definition of attractor, $d(x^*,F_t(x^*))\leq d(x^*,x^*)=0$ which implies $d(x^*,F_t(x^*))=0$ which in turn implies $x^*=F_t(x^*)$.
\end{proof}

\begin{definition}
    Let $M$, $X$ be as before. Given two morphisms (or invariant candidates) $I$, $J\colon M\to X$, we say $I$ is a rough approximation of $J$ if there are transformations $A$, $B\colon X\to X$ such that
    % https://q.uiver.app/?q=WzAsNCxbMSwwLCJNIl0sWzEsMiwiWCJdLFswLDEsIlgiXSxbMiwxLCJYIl0sWzAsMiwiSSIsMl0sWzAsMSwiSiIsMV0sWzAsMywiSSJdLFsyLDEsIkEiLDJdLFszLDEsIkIiXSxbMiw1LCIiLDAseyJzaG9ydGVuIjp7InRhcmdldCI6MjB9fV0sWzUsMywiIiwwLHsic2hvcnRlbiI6eyJzb3VyY2UiOjIwfX1dXQ==
\[\begin{tikzcd}
	& M \\
	X && X \\
	& X
	\arrow["I"', from=1-2, to=2-1]
	\arrow[""{name=0, anchor=center, inner sep=0}, "J"{description}, from=1-2, to=3-2]
	\arrow["I", from=1-2, to=2-3]
	\arrow["A"', from=2-1, to=3-2]
	\arrow["B", from=2-3, to=3-2]
	\arrow[shorten >=5pt, Rightarrow, from=2-1, to=0]
	\arrow[shorten <=5pt, Rightarrow, from=0, to=2-3]
\end{tikzcd}\] lax-commutes.
\end{definition}

In our case a rough approximation of distance is an invariant that gives us a maximal and minimal distance based on the value of the invariant. %We are tempted to define a stable equilibrium as a rough attractor (IE a rough approximation of distance that is also an attractor) but this is defining the thing we want to prove so in order to speak of stable and unstable equilibrium we will first to talk about level-sets.

\section{The level-set paradigm}
An invariant $I\colon M\to X$ creates equivalence classes of points (or regions of~$M$) that have the same value. It is therefore natural to try to understand invariants trough the classes they create.
When it comes to monovariants, the more relevant notion is the concept of a level-set:

%A natural viewpoint when considering energy of a physical system, is to consider not individual points but sets of points, specifically all the trajectories with the same (or lower) energy. We introduce the following notation: 
\begin{definition}
Let $F\colon A\to B$ be a morphism and let $B$ be equipped with an ordering structure. We note $F^\leq\colon B\to \Part(A)$ the morphism that to each element of $b\in B$ assigns the subset $\set{x\in A\mid f(x)\leq b }\in \Part(A)$.
\end{definition}
Another way to write this would be that the level-set is the composition of the preimage with the downward closure operator. Since we have already chosen the convention that $\Part(f)$ is the covariant functor, we keep the definition above to avoid confusion.\footnote{Some authors follow the convention that $\Part(f)$ refers to the preimage functor, so that $\Part(f)$ is a contravariant functor.}

\begin{exemple}
    Consider the discrete time, linear system $M=\R^n$, $T=\N$, $A\in \R^{n\times n}$, $ F_1(x)=Ax$. Suppose we are looking at a monovariant of the form
	\[
		V\colon {\R^n\to \R \colon x\mapsto x^TBx}
		\]
		with $B\in \R^{n\times n}$ invertible (a quadratic Lyapunov function, as we will call them later). Let us concentrate on the case $B=\Id$. 
    The set of all points with the same value under $V$ is the surface of the sphere (an ellipse for $B\neq \Id$). If $V$ is indeed a monovariant, we can expect points of the sphere not to stay strictly in the surface, but instead on the interior volume, (the ball), as time passes. That is the relevant notion would be $\set{x\mid V(x)=r}=V^{-1}(r)$ for an invariant but $\set{x\mid V(x)\leq r}=V^{\leq}(r)$ for a monovariant.
\end{exemple}

\begin{exemple}\label{ex:semiadjunction}
    Let $F\colon T\to End(M)$ be a dynamical system. Consider a monovariant $V\colon M\to I$ where $I=[a,b]\subset \R$ is an interval. We can decompose $V$ into:
% https://q.uiver.app/?q=WzAsNCxbMCwwLCJNIl0sWzEsMCwiXFxQYXJ0KE0pIl0sWzIsMCwiXFxQYXJ0KEkpIl0sWzMsMCwiSSJdLFswLDEsIlxcc2V0e1xcY2RvdH0iXSxbMSwyLCJcXFBhcnQoVikiXSxbMiwzLCJcXG1heCIsMCx7Im9mZnNldCI6LTJ9XSxbMywyLCJcXHNldHtcXGNkb3R9IiwwLHsib2Zmc2V0IjotMn1dLFs2LDcsIiIsMCx7ImxldmVsIjoxLCJzdHlsZSI6eyJuYW1lIjoiYWRqdW5jdGlvbiJ9fV1d
\[\begin{tikzcd}
	M & {\Part(M)} & {\Part(I)} & I
	\arrow["{\set{\cdot}}", from=1-1, to=1-2]
	\arrow["{\Part(V)}", from=1-2, to=1-3]
	\arrow[""{name=0, anchor=center, inner sep=0}, "\max", shift left=2, from=1-3, to=1-4]
	\arrow[""{name=1, anchor=center, inner sep=0}, "{\set{\cdot}}", shift left=2, from=1-4, to=1-3]
	\arrow["\dashv"{anchor=center, rotate=-90}, draw=none, from=0, to=1]
\end{tikzcd}\]
The adjunction on the right (where $\Part(I)$ and $I$ are viewed as categories) is in fact stronger, we have that $\max$ is a left inverse of $\set{\cdot}$. This is possible because an interval is a (co)complete lattice.
We can define $V_{\max}=\max\circ \Part(V)$, and we can observe that $V^{\leq} \vdash V_{\max} $. 
There is no direct adjunction between $V$ and $V^\leq$, since they don't have the same (co)domain, but the property we care about is preserved: $V$ is monovariant
\[\begin{tikzcd}
	M && M \\
	& I
	\arrow["{F_t}", from=1-1, to=1-3]
	\arrow["V"', from=1-1, to=2-2,""{name=0,anchor=center}]
	\arrow["V", from=1-3, to=2-2,""{name=1, anchor=center}]
    \arrow[from=1, to=0, Rightarrow]
\end{tikzcd}\] if and only if its level-set forms a lax cone
\[\begin{tikzcd}
	& {I} \\
	{\Part(M)} && {\Part(M)}
	\arrow["V^\leq"', from=1-2, to=2-1]
	\arrow["V^\leq", from=1-2, to=2-3,""{name=0, anchor=center, inner sep=0}]
	\arrow["{\Part(F_t)}"', from=2-1, to=2-3]
	\arrow["\alpha"', shorten >=2pt, Rightarrow, from=2-1, to=0]
\end{tikzcd}\]
if and only if $V_{\max}$ is a monovariant 
\[\begin{tikzcd}
	\Part(M) && \Part(M) \\
	& I
	\arrow["{\Part(F_t)}", from=1-1, to=1-3]
	\arrow["V_{\max}"', from=1-1, to=2-2,""{name=0,anchor=center}]
	\arrow["V_{\max}", from=1-3, to=2-2,""{name=1, anchor=center}]
    \arrow[from=1, to=0, Rightarrow]
\end{tikzcd}.\]
\end{exemple}
A particular case is $F=d(x,\cdot)$. In that case $F^\leq$ is none other than the ball function giving the ball centred on $x$. For our case it is easier to work with a restriction $B(x,\cdot)\colon \R_{>0}\to \Part(M)$, where we removed $0$ from our order. This means we can talk about being arbitrarily close to $x$ but not talk about being at $x$. 

This allows us to write the usual definition of stability:
\begin{definition}\label{def:lyapequilibrcat}
We say $x^*$ is a \textbf{Lyapunov equilibrium} for the forward dynamical system $F\colon T\to Aut(M)$ if there is a functor $\delta(\cdot)\in End(\R_{>0})$ such that the cell
\[\begin{tikzcd}[ampersand replacement=\&, column sep=small]
	{\R_{>0}} \& {\R_{>0}} \\
	{\Part(M)} \& {\Part(M)}
	\arrow["{\delta(\cdot)}", to=1-1, from=1-2,swap]
	\arrow[""{name=0, anchor=center, inner sep=0},"{B(x^*,\cdot)}"', from=1-1, to=2-1]
	\arrow["{\Part (F_{t})}"', from=2-1, to=2-2]
	\arrow[""{name=1, anchor=center, inner sep=0},"{B(x^*,\cdot)}", from=1-2, to=2-2]
	\arrow[ shorten >=3pt, Rightarrow, from=0, to=1]
\end{tikzcd}\]
admits a natural transformation for all $t\in T$.
\end{definition}
\begin{remark}
This is the classical definition of equilibrium, however since we only care about the behaviour near zero, the order $\R_{>0}$ can be replaced by $N^{op}={(N,\geq)}\simeq {(\set{\frac{1}{n}},\leq)}$. this is in keeping the classical definition of ``arbitrarily close'' as a substitute for ``at an infinitesimal distance'' which might be possible in the context of non-standard analysis.
\end{remark}
\begin{definition}
    A \textbf{Lyapunov function} for $x$ is a monovariant $V\colon M\to \R^+$ such that the restriction of $V\leq$ is a rough approximation of $B(x,\cdot)\colon \R_{>0}\to \Part(M)$, with the extra condition that $B$ be invertible. That is, the following diagrams lax-commute
% https://q.uiver.app/?q=WzAsNCxbMCwyLCJcXFBhcnQoTSkiXSxbMiwyLCJcXFBhcnQoTSkiXSxbMSwxLCJcXFJeKyJdLFsxLDAsIlxcUl97PjB9Il0sWzAsMSwiXFxQYXJ0KEZfdCkiLDJdLFsyLDAsIlZeXFxsZXEiLDJdLFsyLDEsIlZeXFxsZXEiXSxbMywyLCIiLDAseyJzdHlsZSI6eyJ0YWlsIjp7Im5hbWUiOiJob29rIiwic2lkZSI6InRvcCJ9fX1dLFs1LDYsIiIsMix7InNob3J0ZW4iOnsic291cmNlIjoyMCwidGFyZ2V0IjoyMH19XV0=
\begin{equation}\label{eq:inclusion}
\begin{tikzcd}
	& {\R_{>0}} \\
	& {\R^+} \\
	{\Part(M)} && {\Part(M)}
	\arrow["{\Part(F_t)}"', from=3-1, to=3-3]
	\arrow[""{name=0, anchor=center, inner sep=0}, "{V^\leq}"', from=2-2, to=3-1]
	\arrow[""{name=1, anchor=center, inner sep=0}, "{V^\leq}", from=2-2, to=3-3]
	\arrow[hook, from=1-2, to=2-2]
	\arrow[shorten <=8pt, shorten >=8pt, Rightarrow, from=0, to=1]
\end{tikzcd}\end{equation}

\[\begin{tikzcd}[ampersand replacement=\&, column sep=small]	\& {\R_{>0}} \\
	{\R_{>0}} \&\& {\R_{>0}} \\
	\& {\Part(M)}
	\arrow[""{name=0, anchor=center, inner sep=0}, "{V^\leq}"{description}, from=1-2, to=3-2]
	\arrow["A"', from=1-2, to=2-1]
	\arrow["{B(x^*,\cdot)}"', from=2-1, to=3-2]
	\arrow["B", from=1-2, to=2-3]
	\arrow["{B(x^*,\cdot)}", from=2-3, to=3-2]
	\arrow[ shorten >=6pt, Rightarrow, from=2-1, to=0]
	\arrow[ shorten <=6pt, Rightarrow, from=0, to=2-3]
\end{tikzcd}\]
\end{definition}
\begin{remark}
    It is more common to define the Lyapunov function $V$ in the statespace and then construct it's levelset $V\leq$. Here we define it in the level-set and use the adjunction to find a function in statespace.
\end{remark}
\begin{theorem}\label{thm:equilibrium}
    A Lyapunov equilibrium is an equilibrium.
\end{theorem}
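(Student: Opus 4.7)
The plan is to evaluate the lax-commutative square in Definition~\ref{def:lyapequilibrcat} at each $\epsilon \in \R_{>0}$ and read off what it asserts pointwise. For a fixed $t \in T$, the 2-cell at the object $\epsilon$ is the inclusion
\[
\Part(F_t)\bigl(B(x^*,\delta(\epsilon))\bigr)\subseteq B(x^*,\epsilon)
\]
in $\Part(M)$, since the ordering on $\Part(M)$ is subset inclusion. Because $\delta$ takes values in $\R_{>0}$, we have $\delta(\epsilon)>0$, hence $x^*\in B(x^*,\delta(\epsilon))$; feeding this into the inclusion above yields $F_t(x^*)\in B(x^*,\epsilon)$, i.e.\ $d(x^*,F_t(x^*))\leq\epsilon$.

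Next I would let $\epsilon$ range over all of $\R_{>0}$. Since $M$ is a metric space, $\bigcap_{\epsilon>0}B(x^*,\epsilon)=\{x^*\}$, so $d(x^*,F_t(x^*))=0$ and therefore $F_t(x^*)=x^*$. Since $t\in T$ was arbitrary, this yields $F_t(x^*)=x^*$ for every $t$, which is precisely the condition $F(x^*)=x^*$ defining an equilibrium.

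The main obstacle, such as it is, is entirely one of translation: one must confirm that the 2-cell in the diagram of Definition~\ref{def:lyapequilibrcat} does unpack to the subset inclusion above, and that the functor $B(x^*,\cdot)$ really sends a positive radius to a subset containing the centre $x^*$. Once these categorical facts are unwound, what remains is the elementary metric-space observation that a point lying inside every closed ball around $x^*$ must coincide with $x^*$. The brevity of this argument, compared with the classical treatments, reflects the fact that the genuine content has been absorbed into the formulation of Definition~\ref{def:lyapequilibrcat} itself.
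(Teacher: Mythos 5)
Your proposal is correct and follows essentially the same route as the paper's proof: both unwind the natural transformation to the inclusion $\Part(F_t)(B(x^*,\delta(\epsilon)))\subseteq B(x^*,\epsilon)$, use $x^*\in B(x^*,r)$ for every $r>0$, and conclude via $\bigcap_{r>0}B(x^*,r)=\set{x^*}$ that $F_t(x^*)=x^*$ for all $t$. Your pointwise phrasing is, if anything, a slightly cleaner arrangement of the same ingredients than the paper's step of pushing $\Part(F_t)$ through the intersection.
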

\begin{proof}
    Let $1\xrightarrow[]{x} M$ be a Lyapunov equilibrium. We can view it as the subset $\set{x}\in \Part(M)$. The future will be $\set{F_t(x)\mid t\in T}=\bigcup\Part(F_t)(\set{x})$. 
    
Separately $d(x,x)=0\leq r$ implies $x\in B(x,r)$, equivalently $\set{x}\subset B(x,r)$, for any radius $r\in \R_{>0}$.
In fact we can characterise $x$ as the only point at distance 0 from itself, looking at level-sets we get $\set{x}=\bigcap_{r>0}B(x,r)$.
We conclude with \begin{align*}
    \set{F_t(x)}&=\Part(F_t)(\set{x})=\Part(F_t)\Bigl(\bigcap_{r>0}B(x,r)\Bigr) \\
    &=\Part(F_t)\Bigl(\bigcap_{\delta(r)>0}B(x,r)\Bigr)
    \subset \bigcap_{r>0}B(x,r)=\set{x}.
\end{align*}
This is true regardless of the choice of $t\in T$, therefore the future is \[\set{F_t(x)\mid t\in T}=\bigcup\Part(F_t)(\set{x})=\bigcup \set{x}=\set{x}.\qedhere\] 
\end{proof}

\begin{theorem}\label{thm:Lyapunov}
    A Lyapunov function induces a Lyapunov equilibrium.
\end{theorem}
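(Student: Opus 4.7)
The plan is to unpack the two lax-commuting diagrams that define a Lyapunov function for $x^*$ and combine them, using the invertibility of $B$, to construct the functor $\delta$ required by Definition \ref{def:lyapequilibrcat}. First I would read off three basic inclusions. From the monovariance diagram (\ref{eq:inclusion}), together with the equivalence spelled out in Example \ref{ex:semiadjunction}, I get that $\Part(F_t)(V^\leq(r)) \subseteq V^\leq(r)$ for every $r \in \R^+$ and every $t \in T$. From the rough approximation diagram I get the inner and outer sandwiching $B(x^*, A(r)) \subseteq V^\leq(r) \subseteq B(x^*, B(r))$ for every $r \in \R_{>0}$.

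Next I would define $\delta = A \circ B^{-1}\colon \R_{>0} \to \R_{>0}$. Since $A$ and $B$ are functors between posets and $B$ is invertible by hypothesis, $B^{-1}$ is order-preserving, and so $\delta$ is functorial. The required lax cell then follows from the chain of inclusions
\begin{align*}
\Part(F_t)\bigl(B(x^*, \delta(r))\bigr) &= \Part(F_t)\bigl(B(x^*, A(B^{-1}(r)))\bigr) \\
&\subseteq \Part(F_t)\bigl(V^\leq(B^{-1}(r))\bigr) \\
&\subseteq V^\leq(B^{-1}(r)) \\
&\subseteq B(x^*, B(B^{-1}(r))) = B(x^*, r),
\end{align*}
which applies in order the inner rough-approximation bound, monovariance, and the outer rough-approximation bound. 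Read pointwise in $r$ and uniformly in $t \in T$, this is exactly the natural transformation asked for in the Lyapunov equilibrium square.

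I expect the main subtlety to be checking that $\delta$ is a well-defined functor $\R_{>0} \to \R_{>0}$, that is, that the restriction of $B^{-1}$ to $\R_{>0}$ lands in $\R_{>0}$ and is monotone. This amounts to confirming that the invertibility condition built into the Lyapunov function is read in the category of posets rather than merely as set-theoretic bijectivity; once that is granted, the argument is purely a diagram chase. Conceptually, the proof isolates the single reason for the classical equivalence to hold: a monovariant level-set that roughly approximates the ball function provides a middle term $V^\leq(B^{-1}(r))$ into which the future of a small ball can be funneled and then projected back out as a larger ball, which is exactly the bookkeeping done by hand in the usual $\varepsilon$–$\delta$ proofs.
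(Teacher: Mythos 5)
Your proposal is correct and follows essentially the same route as the paper: the paper performs the identical construction as a single diagram pasting (equation \eqref{eq:gluing}), gluing the two rough-approximation triangles and the monovariance square, with Corollary \ref{lem:betainverse} doing the work of your precomposition with $B^{-1}$, and reading off $\delta = A\circ B^{-1}$ from the outer square. Your chain of inclusions is just that pasting unwound pointwise, and your remark about $B^{-1}$ needing to be order-preserving is the same content as Lemma \ref{lem:inversetrianglecat}.
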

\begin{proof}
Using categorical cut and paste we get
% https://q.uiver.app/?q=WzAsNSxbMCwxLCJcXFBhcnQoTSkiXSxbMiwxLCJcXFBhcnQoTSkiXSxbMSwwLCJcXFJfez4wfSJdLFswLDAsIlxcUl97PjB9Il0sWzIsMCwiXFxSX3s+MH0iXSxbMCwxLCJcXFBhcnQoRl90KSIsMl0sWzIsMCwiVl5cXGxlcSIsMl0sWzIsMSwiVl5cXGxlcSJdLFsyLDMsIkEiLDJdLFszLDAsIkIoeCxcXGNkb3QpIiwyXSxbNCwxLCJCKHgsXFxjZG90KSJdLFs0LDIsIkJeey0xfSIsMl0sWzYsNywiIiwyLHsic2hvcnRlbiI6eyJzb3VyY2UiOjIwLCJ0YXJnZXQiOjIwfX1dLFs3LDEwLCIiLDIseyJzaG9ydGVuIjp7InNvdXJjZSI6MjAsInRhcmdldCI6MjB9fV0sWzksNiwiIiwyLHsic2hvcnRlbiI6eyJzb3VyY2UiOjIwLCJ0YXJnZXQiOjIwfX1dXQ==
\begin{equation}\label{eq:gluing}
    \begin{tikzcd}
	{\R_{>0}} & {\R_{>0}} & {\R_{>0}} \\
	{\Part(M)} && {\Part(M)}
	\arrow["{\Part(F_t)}"', from=2-1, to=2-3]
	\arrow[""{name=0, anchor=center, inner sep=0}, "{V^\leq}"', from=1-2, to=2-1]
	\arrow[""{name=1, anchor=center, inner sep=0}, "{V^\leq}", from=1-2, to=2-3]
	\arrow["A"', from=1-2, to=1-1]
	\arrow[""{name=2, anchor=center, inner sep=0}, "{B(x,\cdot)}"', from=1-1, to=2-1]
	\arrow[""{name=3, anchor=center, inner sep=0}, "{B(x,\cdot)}", from=1-3, to=2-3]
	\arrow["{B^{-1}}"', from=1-3, to=1-2]
	\arrow[shorten <=8pt, shorten >=8pt, Rightarrow, from=0, to=1]
	\arrow[shorten <=4pt, shorten >=4pt, Rightarrow, from=1, to=3]
	\arrow[shorten <=4pt, shorten >=4pt, Rightarrow, from=2, to=0]
\end{tikzcd}
\end{equation}
The left and center triangles are given by the definition of Lyapunov function. The upper right triangle can be obtained through Lemma \ref{lem:betainverse}. The outer square is the definition of Lyapunov equilibrium.
\end{proof}

We needed the following lemma to complete the proof. 
\begin{lemma}[Folk]\label{lem:inversetrianglecat} 
Let $F\colon A\to B,G\colon A\to C,H\colon B\to C$ be functors where $F$ is invertible. Then we have that
% https://q.uiver.app/?q=WzAsMyxbMCwwLCJBIl0sWzAsMiwiQyJdLFsxLDEsIkIiXSxbMCwxLCJHIiwyXSxbMCwyLCJGIiwyXSxbMiwxLCJIIl0sWzMsMiwiIiwyLHsic2hvcnRlbiI6eyJzb3VyY2UiOjIwfX1dXQ==
$$\begin{tikzcd}
	A \\
	& B \\
	C
	\arrow[""{name=0, anchor=center, inner sep=0}, "G"', from=1-1, to=3-1]
	\arrow["F", from=1-1, to=2-2]
	\arrow["H", from=2-2, to=3-1]
	\arrow[shorten <=5pt, Rightarrow, from=0, to=2-2]
\end{tikzcd}\iff \begin{tikzcd}
	A \\
	& B \\
	C
	\arrow[""{name=0, anchor=center, inner sep=0}, "G"', from=1-1, to=3-1]
	\arrow["F^{-1}"', to=1-1, from=2-2]
	\arrow["H", from=2-2, to=3-1]
	\arrow[shorten <=5pt, Rightarrow, from=0, to=2-2]
\end{tikzcd} $$
\end{lemma}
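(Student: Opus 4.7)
The plan is to recognise this as the standard 2-categorical fact that, when a 1-cell $F$ is strictly invertible, 2-cells can be transposed across it by right-whiskering with $F$ or $F^{-1}$.

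Concretely, starting from a 2-cell $\alpha\colon G\Rightarrow H\circ F$ realising the left triangle, I would right-whisker with $F^{-1}$ to obtain
\[
\alpha\ast F^{-1}\colon G\circ F^{-1}\Rightarrow H\circ F\circ F^{-1}=H,
\]
which is a 2-cell of exactly the shape required on the right. Conversely, from $\beta\colon G\circ F^{-1}\Rightarrow H$ realising the right triangle I would right-whisker with $F$ to obtain
\[
\beta\ast F\colon G=G\circ F^{-1}\circ F\Rightarrow H\circ F,
\]
which lax-fills the left triangle.

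The two constructions are mutually inverse by associativity of horizontal composition together with the strict equalities $F\circ F^{-1}=\Id_B$ and $F^{-1}\circ F=\Id_A$: indeed $(\alpha\ast F^{-1})\ast F=\alpha\ast(F^{-1}\circ F)=\alpha\ast\Id_A=\alpha$, and symmetrically $(\beta\ast F)\ast F^{-1}=\beta$. I do not anticipate any real obstacle; the only point to make explicit is that ``$F$ invertible'' is meant in the strict sense of an isomorphism of categories, so that the whiskered composites collapse on the nose rather than only up to natural isomorphism. If $F$ were merely an equivalence, the same argument would still give a bijection up to isomorphism of 2-cells, which would suffice for the use made of the lemma in the proof of Theorem \ref{thm:Lyapunov}.
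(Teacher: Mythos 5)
Your proposal is correct and matches the paper's own proof: the paper likewise obtains the forward direction by horizontally composing (whiskering) the given 2-cell $G\Rightarrow H\circ F$ with the identity on $F^{-1}$, using $F\circ F^{-1}=\Id$ to collapse the codomain to $H$, and treats the converse symmetrically. Your extra observations that the two constructions are mutually inverse and that invertibility must be strict go slightly beyond what the paper records, but they do not change the argument.
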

\begin{proof}

By composition of natural transformations we have
\[\begin{tikzcd}
	B \\
	A \\
    C
	\arrow[""{name=0, anchor=center, inner sep=0}, "F^{-1}",bend left, from=1-1, to=2-1]
	\arrow[""{name=1, anchor=center, inner sep=0}, "F^{-1}"', bend right, from=1-1, to=2-1]
	\arrow[""{name=2, anchor=center, inner sep=0}, "FH",bend left, from=2-1, to=3-1]
	\arrow[""{name=3, anchor=center, inner sep=0}, "G"',bend right, from=2-1, to=3-1]
	\arrow[""', shorten <=2pt, shorten >=2pt, Rightarrow, from=1, to=0]
	\arrow[""', shorten <=2pt, shorten >=2pt, Rightarrow, from=3, to=2]
\end{tikzcd} \] which implies there is a natural transformation from $F^{-1}G$ to $F^{-1}FH=\Id_AH=H$.
The other direction is similar.
\end{proof}
In our case, this translates to the following corollary.
\begin{corollary}\label{lem:betainverse} 
$$\begin{tikzcd}
	 {\R_{>0}} \\
	& {\R_{>0}} \\
	{\Part(M)}
	\arrow[""{name=0, anchor=center, inner sep=0}, "{V^\leq}"{description}, from=1-1, to=3-1]
	\arrow["B", from=1-1, to=2-2]
	\arrow["{B(x^*,\cdot)}", from=2-2, to=3-1]
	\arrow[ shorten <=6pt, Rightarrow, from=0, to=2-2]
\end{tikzcd}\iff \begin{tikzcd}
	 {\R_{>0}} \\
	& {\R_{>0}} \\
	{\Part(M)}
	\arrow[""{name=0, anchor=center, inner sep=0}, "{V^\leq}"{description}, from=1-1, to=3-1]
	\arrow["B^{-1}"', to=1-1, from=2-2]
	\arrow["{B(x^*,\cdot)}", from=2-2, to=3-1]
	\arrow[ shorten <=6pt, Rightarrow, from=0, to=2-2]
\end{tikzcd}$$
\end{corollary}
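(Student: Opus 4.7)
The plan is to apply Lemma~\ref{lem:inversetrianglecat} directly, since the corollary is just the specialisation of that folk lemma to the categories and functors at hand. Concretely, I would substitute $A := \R_{>0}$ (the top vertex), $B := \R_{>0}$ (the middle vertex, which is unfortunately named the same as the functor $B$ appearing in the triangle -- I would flag this clash of notation to the reader), $C := \Part(M)$, the invertible functor $F := B$, the functor $G := V^\leq$, and $H := B(x^*,\cdot)$. With these identifications the two triangles in Corollary~\ref{lem:betainverse} become literally the two triangles in Lemma~\ref{lem:inversetrianglecat}.

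The only hypothesis that has to be discharged before invoking the lemma is the invertibility of $F = B$. This is part of the hypotheses we have built into the definition of a Lyapunov function: the second lax-commutative diagram in that definition requires $B$ to be invertible (``with the extra condition that $B$ be invertible''). So the hypothesis is free, and the corollary follows by one line of quotation.

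I do not foresee any real obstacle here; the substance of the argument has been isolated into the preceding folk lemma, and the corollary is just a change of names. If anything, the only delicate point is stylistic -- making sure the reader is not confused by the double use of the symbol $B$ for the middle object of the diagram and for the invertible functor $B\colon \R_{>0}\to \R_{>0}$ appearing in the definition of a Lyapunov function. A single sentence clarifying this before citing Lemma~\ref{lem:inversetrianglecat} should suffice.
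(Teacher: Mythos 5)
Your proposal is correct and matches the paper's intent exactly: the paper states this corollary immediately after Lemma~\ref{lem:inversetrianglecat} with the remark ``In our case, this translates to the following corollary,'' i.e.\ it is obtained by precisely the specialisation you describe, with the invertibility of the functor $B$ supplied by the definition of a Lyapunov function. Your flagging of the notational clash between the middle object and the functor both written $B$ is a reasonable stylistic addition but does not change the substance.
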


It is noteworthy that neither Lemma \ref{lem:inversetrianglecat} nor equation \eqref{eq:gluing} make any real use of the level-set. The level-set is hidden in two things: in the definition of Lyapunov stability and in the inclusion \eqref{eq:inclusion}. This inclusion is possible because we are restricting the domain of the morphism $V^{\leq}$, which is much easier than restricting the codomain of $V$. This asymmetry between restricting the domain and restricting the codomain is the main reason we look at level-set.

\section{The converse Lyapunov theorem}

The construction of the last diagram in the proof of \ref{thm:Lyapunov} turns out to be useful, in the next lemma, we show it can be obtained without loss of generality.

\begin{lemma}\label{lemma:step1}
The following conditions are equivalent:
\begin{tfae}
    \item There exist $\delta$ such that the following is satisfied \[\begin{tikzcd}[ampersand replacement=\&, column sep=small]
	{\R_{>0}} \& {\R_{>0}} \\
	{\Part(M)} \& {\Part(M)}
	\arrow["{\delta(\cdot)}", to=1-1, from=1-2,swap]
	\arrow[""{name=0, anchor=center, inner sep=0},"{B(x^*,\cdot)}"', from=1-1, to=2-1]
	\arrow["{\Part (F_{t_1})}"', from=2-1, to=2-2]
	\arrow[""{name=1, anchor=center, inner sep=0},"{B(x^*,\cdot)}", from=1-2, to=2-2]
	\arrow["\subset"{description} shorten >=3pt, Rightarrow, from=0, to=1]
\end{tikzcd}\]
    \item There exist $\delta_+,\delta_-,S_1,S_2$ such that the following is satisfied \[\begin{tikzcd}
	{\R_{>0}} & {\R_{>0}} & {\R_{>0}} \\
	{\Part(M)} && {\Part(M)}
	\arrow["{\delta_-}"', from=1-2, to=1-1]
	\arrow["{\delta_+}"', from=1-3, to=1-2]
	\arrow["{B(x^*,\cdot)}"', from=1-1, to=2-1]
	\arrow["{\Part(F_{t_1})}"', from=2-1, to=2-3]
	\arrow[""{name=0, anchor=center, inner sep=0}, "{B(x^*,\cdot)}", from=1-3, to=2-3]
	\arrow[""{name=1, anchor=center, inner sep=0}, "{S_1}"{description}, from=1-2, to=2-1]
	\arrow[""{name=2, anchor=center, inner sep=0}, "{S_2}"{description}, from=1-2, to=2-3]
	\arrow["\beta"', shorten >=3pt, Rightarrow, from=1-1, to=1]
	\arrow["\alpha"{description}, shorten >=8pt, Rightarrow, from=2-1, to=2]
	\arrow["\gamma", shorten <=7pt, Rightarrow, from=1-2, to=0]
\end{tikzcd}\]
\end{tfae}
\end{lemma}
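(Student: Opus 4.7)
The plan is to prove both implications separately. The direction (ii)~$\Rightarrow$~(i) will be a 2-categorical pasting of the three cells $\alpha,\beta,\gamma$ into one; the direction (i)~$\Rightarrow$~(ii) will be handled by choosing a trivial factorisation in which two of the three cells are identities.

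For (ii)~$\Rightarrow$~(i), I first read the direction of each 2-cell off the diagram:
\[
\beta\colon B(x^*,\cdot)\circ\delta_- \Rightarrow S_1,\qquad \alpha\colon \Part(F_{t_1})\circ S_1 \Rightarrow S_2,\qquad \gamma\colon S_2\circ\delta_+ \Rightarrow B(x^*,\cdot).
\]
Since $\R_{>0}$ and $\Part(M)$ are posets, each of these is just an inclusion or inequality. Whiskering $\beta$ by $\delta_+$ on the right and by $\Part(F_{t_1})$ on the left (the latter being valid because $\Part(F_{t_1})$ is a covariant, hence monotone, functor on $\Part(M)$), then whiskering $\alpha$ by $\delta_+$ on the right, and finally composing vertically with $\gamma$, yields a 2-cell
\[
\Part(F_{t_1})\circ B(x^*,\cdot)\circ(\delta_-\circ\delta_+) \Rightarrow \Part(F_{t_1})\circ S_1\circ\delta_+ \Rightarrow S_2\circ\delta_+ \Rightarrow B(x^*,\cdot).
\]
Setting $\delta:=\delta_-\circ\delta_+$, this is precisely the $\subset$-cell required by (i).

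For (i)~$\Rightarrow$~(ii), I make the trivial factorisation $\delta_-:=\delta$, $\delta_+:=\Id_{\R_{>0}}$, $S_1:=B(x^*,\delta(\cdot))$, and $S_2:=B(x^*,\cdot)$. With these choices $\beta$ collapses to the identity on $B(x^*,\delta(\cdot))$, the cell $\gamma$ collapses to the identity on $B(x^*,\cdot)$, and $\alpha$ becomes exactly the inclusion $\Part(F_{t_1})(B(x^*,\delta(r)))\subset B(x^*,r)$ that is guaranteed by (i). A symmetric choice ($\delta_+:=\delta$, $\delta_-:=\Id$, $S_1:=B(x^*,\cdot)$, $S_2:=\Part(F_{t_1})\circ B(x^*,\cdot)$) works equally well.

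The main obstacle is not mathematical but notational: correctly pinning down the source and target of each of $\alpha,\beta,\gamma$ in the pasting diagram of (ii), after which the proof is a routine calculation with inclusions. The interesting content of the lemma is therefore not the proof itself but its purpose: it legitimises inserting arbitrary intermediate ``shapes'' $S_1,S_2$ between the balls around $x^*$ and their images under $\Part(F_{t_1})$, a flexibility that will be exploited later by letting $S_1,S_2$ be sub-level sets of a candidate Lyapunov function.
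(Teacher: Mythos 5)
Your proof is correct and follows essentially the same route as the paper: for (i)~$\Rightarrow$~(ii) you take the same trivial factorisation ($\delta_+=\Id$, $\delta_-=\delta$, $S_1=B(x^*,\delta(\cdot))$, $S_2=B(x^*,\cdot)$), and for (ii)~$\Rightarrow$~(i) you paste the three cells into the outer square with $\delta$ the composite of $\delta_-$ and $\delta_+$, exactly as the paper does. You are merely more explicit about the whiskering steps (and about why $\Part(F_{t_1})$ preserves the 2-cell), which the paper compresses into ``by composition''.
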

\begin{proof}
(i) $\Rightarrow$ (ii) Set $\delta_+=\Id_{\R_{>0}}$, $\delta_-=\delta(\cdot)$ and $S_1=\delta_-\circ B(x^*,\cdot)=B(x^*,\delta_-)$, $S_2=B(x^*,\cdot)$. The left and right triangle commute by definition, so they admit the identity as natural transformation.
The inner triangle has a natural transformation given by hypothesis: $\alpha=\subset$.

(ii) $\Rightarrow$ (i) Set $\delta(\cdot)=\delta_+\circ\delta_-$ and look only at the outer square. Since all of the inner triangles admit a natural transformation, the outer square also admits a natural transformation, by composition.
\end{proof}

This bring us to our main theorem. Note that Lemma  \ref{lem:inversetrianglecat} does not depend on any property of time. However for the next step we will need to use the ordering of time and the fact the composition of time intervals is functorial.
\begin{theorem}\label{thm:Lyapunovconv}
 A forward dynamical system $F\colon T\to M$ has a Lyapunov equilibrium $x^*$ if and only if there is a Lyapunov function for it.
\end{theorem}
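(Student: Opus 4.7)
The forward direction---a Lyapunov function yields a Lyapunov equilibrium---is exactly Theorem \ref{thm:Lyapunov}, so only the converse requires new work. The plan is to follow the remark after the definition of Lyapunov function and construct the level-set $V^{\leq}\colon \R_{>0}\to\Part(M)$ directly, recovering a function $V\colon M\to\R^+$ only afterwards via the adjunction of Example \ref{ex:semiadjunction}.

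Given a Lyapunov equilibrium $x^{*}$ with witnessing functor $\delta\colon \R_{>0}\to\R_{>0}$, I would set
\[
V^{\leq}(r)\;=\;\bigcup_{t\in T}\Part(F_t)\bigl(B(x^{*},\delta(r))\bigr),
\]
i.e.\ the forward-reachable set of the $\delta(r)$-ball around $x^{*}$. Morally this is what one obtains by applying Lemma \ref{lemma:step1} pointwise at every $t_{1}\in T$ (with $\delta_{-}=\delta$ and $\delta_{+}=\Id$) and then amalgamating the resulting family $\{S_{1}^{t_{1}}\}$ into a single subobject of $M$ that no longer depends on the time being witnessed. The two defining diagrams of a Lyapunov function then follow routinely. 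For the rough-approximation diagram, take $A=\delta$ and $B=\Id_{\R_{>0}}$ (invertible as required): the inclusion $B(x^{*},\delta(r))\subseteq V^{\leq}(r)$ is the $t=0$ summand of the union (using $F_{0}=\Id_{M}$, which follows from $F$ being a functor and $0$ the monoid identity of $T$), and the inclusion $V^{\leq}(r)\subseteq B(x^{*},r)$ is the Lyapunov-equilibrium condition $\Part(F_t)(B(x^{*},\delta(r)))\subseteq B(x^{*},r)$ applied termwise in the union. For monovariance, functoriality of $F$ on the timeline gives, for any $s\in T$,
\[
\Part(F_{s})\bigl(V^{\leq}(r)\bigr)\;=\;\bigcup_{t\in T}\Part(F_{s+t})\bigl(B(x^{*},\delta(r))\bigr)\;\subseteq\;V^{\leq}(r),
\]
since $\{s+t:t\in T\}\subseteq T$ and the union over this smaller index set is contained in the original.

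The delicate point, and the conceptual reason the level-set paradigm pays dividends here, is the asymmetry flagged at the end of Section 4. Our construction yields a perfectly well-behaved $V^{\leq}$ on $\R_{>0}$, but the corresponding $V(x)=\inf\{r : x\in V^{\leq}(r)\}$ may well be $+\infty$ outside a stability neighbourhood of $x^{*}$, so turning this into an honest function $V\colon M\to\R^{+}$ is not automatic. I expect this to be the main obstacle; but both diagrams defining a Lyapunov function only see $V^{\leq}$ restricted to $\R_{>0}$, so the infinite value never actually appears in the verification. Working at the level-set level from the start is exactly what lets us side-step the six-page classical arguments.
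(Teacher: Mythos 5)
Your proof is correct and follows essentially the same route as the paper: both construct $V^{\leq}(r)$ as the forward-reachable set of the ball $B(x^{*},\delta(r))$, verify the two rough-approximation inclusions from the $t=0$ term and from the equilibrium condition applied termwise, and obtain monovariance of the level-sets from the semigroup shift $\set{s+t : t\in T}\subseteq T$. The only cosmetic difference is that you take $(A,B)=(\delta,\Id)$ directly rather than passing through the $\delta_{\pm},S_{1},S_{2}$ decomposition of Lemma \ref{lemma:step1}, which if anything is cleaner since it never invokes $\delta_{-}^{-1}$.
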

\begin{proof}
Theorem \ref{thm:Lyapunov} mean we only have to prove the necessity of having a Lyapunov function.
By Lemma  \ref{lemma:step1}, having a Lyapunov equilibrium means that we have \[\begin{tikzcd}[ampersand replacement=\&, column sep=small]
	{\R_{>0}} \& {\R_{>0}} \& {\R_{>0}} \\
	{\Part(M)} \&\& {\Part(M)}
	\arrow["{\delta_-}"', from=1-2, to=1-1]
	\arrow["{\delta_+}"', from=1-3, to=1-2]
	\arrow["{B(x^*,\cdot)}"', from=1-1, to=2-1]
	\arrow["{\Part(F_t)}"', from=2-1, to=2-3]
	\arrow[""{name=0, anchor=center, inner sep=0}, "{B(x^*,\cdot)}", from=1-3, to=2-3]
	\arrow[""{name=1, anchor=center, inner sep=0}, "{S_1}"{description}, from=1-2, to=2-1]
	\arrow[""{name=2, anchor=center, inner sep=0}, "{S_2}"{description}, from=1-2, to=2-3]
	\arrow[, shorten >=3pt, Rightarrow, from=1-1, to=1]
	\arrow["\subset"{description}, shorten >=8pt, Rightarrow, from=2-1, to=2]
	\arrow[, shorten <=7pt, Rightarrow, from=1-2, to=0]
\end{tikzcd}\]
with $\delta_+=\Id$ invertible. It remains to show we can construct $V^{\leq}$ respecting $\begin{tikzcd}[ampersand replacement=\&, column sep=small]
	{\Part(M)} \& \& {\Part(M)} \\
	\& {\R_{>0}}
	\arrow[""{name=0, anchor=center, inner sep=0},"V^\leq"', to=1-1, from=2-2,swap]
	\arrow["V^\leq", to=1-3, from=2-2,swap]
	\arrow["{\Part (F_t)}", to=1-1, from=1-3,swap]
	\arrow["\supset"{description}, shorten >=8pt, Rightarrow, from=1-3, to=0]
\end{tikzcd}$ and  $\begin{tikzcd}[ampersand replacement=\&, column sep=small]
	\& {\R_{>0}} \\
	{\R_{>0}} \&\& {\R_{>0}} \\
	\& {\Part(M)}
	\arrow[""{name=0, anchor=center, inner sep=0}, "{V^\leq}"{description}, from=1-2, to=3-2]
	\arrow["A"', from=1-2, to=2-1]
	\arrow["{B(x^*,\cdot)}"', from=2-1, to=3-2]
	\arrow["B", from=1-2, to=2-3]
	\arrow["{B(x^*,\cdot)}", from=2-3, to=3-2]
	\arrow[ shorten >=6pt, Rightarrow, from=2-1, to=0]
	\arrow[ shorten <=6pt, Rightarrow, from=0, to=2-3]
\end{tikzcd}$ with $B$ invertible.
If $S_1=S_2$ the proof ends there. Otherwise we construct explicitly $V^{\leq}$ by putting $$V^\leq(\epsilon)=\bigcup_{t_i\leq 0}F_{t_i}S_1(\epsilon)=\set{x\in M\mid \exists t\in T,\exists y\in S_1(\epsilon) : F_t(y)=x}.$$
%That is to say it is the internal pointwise limit of the diagram $FS_1$. We can go from pointwise to full function if the topos is well pointed, that is 1 is a generator.

One can check using set-theoretical methods that $S_1(\epsilon) \subset V^\leq(\epsilon)\subset S_2(\epsilon)$, which gives the diamond condition.
\[\begin{tikzcd}
	& {\R_{>0}} \\
	{\R_{>0}} && {\R_{>0}} \\
	& {\Part(M)}
	\arrow[""{name=0, anchor=center, inner sep=0}, "{V^\leq}"{description}, from=1-2, to=3-2]
	\arrow[""{name=1, anchor=center, inner sep=0}, "{S_2}"{description}, curve={height=-24pt}, from=1-2, to=3-2]
	\arrow[""{name=2, anchor=center, inner sep=0}, "{S_1}"{description}, curve={height=24pt}, from=1-2, to=3-2]
	\arrow["{\delta_+}"', from=1-2, to=2-1]
	\arrow["{B(x^*,\cdot)}"', from=2-1, to=3-2]
	\arrow["{\delta_-^{-1}}", from=1-2, to=2-3]
	\arrow["{B(x^*,\cdot)}", from=2-3, to=3-2]
	\arrow[shorten >=4pt, Rightarrow, from=2-1, to=2]
	\arrow[shorten <=4pt, Rightarrow, from=1, to=2-3]
	\arrow[shorten <=5pt, shorten >=4pt, Rightarrow, from=0, to=1]
	\arrow[shorten <=4pt, shorten >=5pt, Rightarrow, from=2, to=0]
\end{tikzcd}\]
Then we can compute
\begin{align*}
    F_t(V^\leq (\epsilon))&=F_{t}(\bigcup_{  t_i\geq 0}F_{t_i}(S_1(\epsilon)))
    =\bigcup_{ t_i\geq 0}F_{t+t_i}(S_1(\epsilon))\\
    &= \bigcup_{t_i'\geq t}F_{t_i'}(S_1(\epsilon))
    \subset \bigcup_{ t'\geq 0}F_{t'}(S_1(\epsilon))=V^\leq (\epsilon),
\end{align*}
which proves that $F$ decreases on trajectories.
\end{proof}
We will note that extra properties of $A$, $B$ can often be directly translated into extra properties of $\delta_-$, $\delta_+$, allowing this theorem to hold also for other notions of equilibrium (such as asymptotic and/or global equilibrium) provided we ask the corresponding restriction on Lyapunov functions. 
As an example, we say that the equilibrium is global if $\delta_-$ is invertible and the corresponding notion of Lyapunov function is to ask for $\alpha$ to be invertible.
\section{Strong, weak and other equilibria}
Now that we have established an equivalence for one notion of equilibrium, let us expand it to other notions of equilibrium that can be found in the literature.

We make a detour to note the following:
\begin{itemize}
    \item if $\delta$ is a proof for Lyapunov equilibrium, then the same is true for any $\delta'\leq \delta$;
    \item for any monotonic function $\delta \colon \R^+\to \R^+$, there is a $C^\infty$ monotonic function $\delta'$ such that $\delta'\leq \delta$;
    \item if $\delta$ is a continuous proof for a Lyapunov equilibrium, then it is a contraction $\R^+\to (\sup\delta,0)$;
    \item having a Lyapunov equilibrium with $\delta$ invertible is equivalent to having a Lyapunov equilibrium with $\delta$ such that $\sup Im (\delta) =+\infty$.
\end{itemize}
Balls of radius $\delta$ represent the region of attraction of the equilibrium. If the function $\delta(\cdot)$ is unbounded, it means every point of the state-space is attracted to the equilibrium, so we say it is a global equilibrium.
\begin{definition}
    A global equilibrium is an equilibrium with extra properties: we require from the diagram \[\begin{tikzcd}[ampersand replacement=\&, column sep=small]
	{\R_{>0}} \& {\R_{>0}} \\
	{\Part(M)} \& {\Part(M)}
	\arrow["{\delta(\cdot)}", to=1-1, from=1-2,swap]
	\arrow[""{name=0, anchor=center, inner sep=0},"{B(x^*,\cdot)}"', from=1-1, to=2-1]
	\arrow["{\Part (F_{t_1})}"', from=2-1, to=2-2]
	\arrow[""{name=1, anchor=center, inner sep=0},"{B(x^*,\cdot)}", from=1-2, to=2-2]
	\arrow["\subset"{description} shorten >=3pt, Rightarrow, from=0, to=1]
\end{tikzcd}\] that $\delta$ is invertible.
\end{definition}
\begin{definition}
    A global Lyapunov function is a Lyapunov function with the extra requirement that $A$ be invertible in the diagram of rough approximation of distance.
    \[\begin{tikzcd}
	& M \\
	\R^+ && \R^+ \\
	& \R^+
	\arrow["{d(x,\cdot)}"', from=1-2, to=2-1]
	\arrow[""{name=0, anchor=center, inner sep=0}, "V"{description}, from=1-2, to=3-2]
	\arrow["{d(x,\cdot)}", from=1-2, to=2-3]
	\arrow["A"', from=2-1, to=3-2]
	\arrow["B", from=2-3, to=3-2]
	\arrow[shorten >=5pt, Rightarrow, from=2-1, to=0]
	\arrow[shorten <=5pt, Rightarrow, from=0, to=2-3]
\end{tikzcd}\]
\end{definition}
\begin{remark}
    Thanks to Lemma \ref{lem:inversetrianglecat} this means that $V$ and $d(x^*)$ are both rough approximation of each other.
\end{remark}
\begin{theorem}
    A forward dynamical system has a global equilibrium if and only if it has a global Lyapunov function for it.
\end{theorem}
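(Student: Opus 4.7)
The plan is to revisit the proofs of Theorems \ref{thm:Lyapunov} and \ref{thm:Lyapunovconv} and verify that the additional invertibility data (of $A$ on the Lyapunov-function side and of $\delta$ on the equilibrium side) propagates through the constructions already built there. Since the shapes of the diagrams do not change, the work is essentially bookkeeping: checking that ``composition of invertibles is invertible'' in the gluing diagram, and that the explicit $V^{\leq}$ constructed in the converse of Theorem \ref{thm:Lyapunovconv} still satisfies the strengthened rough-approximation condition.

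For the direction global Lyapunov function $\Rightarrow$ global equilibrium I would reuse diagram \eqref{eq:gluing}. The function witnessing Lyapunov stability there is read off the top edge as the composite $\delta = A \circ B^{-1}$, with $B^{-1}$ supplied by Corollary \ref{lem:betainverse}. In the ordinary case the Lyapunov hypothesis already guarantees that $B$, hence $B^{-1}$, is invertible; adding the global hypothesis that $A$ is also invertible makes the composite invertible, which is precisely the strengthened condition defining a global equilibrium.

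For the converse direction I would rerun the construction of Theorem \ref{thm:Lyapunovconv}, namely $V^{\leq}(\epsilon) = \bigcup_{t \geq 0} F_t S_1(\epsilon)$. The decomposition supplied by Lemma \ref{lemma:step1} sets $\delta_+ = \Id_{\R_{>0}}$ (trivially invertible) and $\delta_- = \delta$; the diamond produced at the end of that proof identifies the left leg of the rough-approximation square with $\delta_+$ and the right leg with $\delta_-^{-1}$. Under the global hypothesis the original $\delta$ is invertible, so $\delta_-^{-1}$ exists and plays the role of $B$, while $A = \delta_+ = \Id$ is trivially invertible. This yields a global Lyapunov function as required. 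The monovariance inclusion $F_t(V^{\leq}(\epsilon)) \subset V^{\leq}(\epsilon)$ is inherited verbatim from the calculation in Theorem \ref{thm:Lyapunovconv}, since it uses only the monoid structure of $T$ and does not interact with the invertibility data.

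The main obstacle I anticipate is not conceptual but notational: keeping straight which arrow in which diagram plays the role of $A$, $B$, $\delta_+$, or $\delta_-$, and in which direction, so that Corollary \ref{lem:betainverse} is applied the right way around. Once this bookkeeping is done, invertibility passes through every step because the composite of invertible natural transformations is invertible, and the identity is a fortiori invertible.
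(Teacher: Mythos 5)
Your proposal is correct and follows essentially the same route as the paper, whose entire proof is the instruction to rerun Theorems \ref{thm:Lyapunov} and \ref{thm:Lyapunovconv} while tracking how invertibility of $\delta$ and of $A$ propagate through the gluing diagram \eqref{eq:gluing} and the construction $V^{\leq}(\epsilon)=\bigcup_{t\geq 0}F_tS_1(\epsilon)$. Your bookkeeping ($\delta=A\circ B^{-1}$ in one direction; $A=\delta_+=\Id$ and $B=\delta_-^{-1}$ in the other) is in fact more explicit than the paper's one-line chain ``$\delta$ invertible $\Rightarrow$ $\delta_+$, $\alpha$, $A$ invertible'', and is consistent with the diagrams it refers to.
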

\begin{proof}
    Follow the proofs of Theorem \ref{thm:Lyapunov} and Theorem \ref{thm:Lyapunovconv}, making sure to note that $\delta$ being invertible implies in succession that $\delta_+$, $\alpha$ and $A$ are invertible. Same in the other direction.
\end{proof}

In fact the same definitions and proofs would stand for any property of $A$ and suitable properties assigned to $B$.

\begin{definition}
    Let $P$ be a set of functions, or a collection of morphisms. We say $f$ \textbf{has property} $P$ if $f\in P$.
\end{definition}
\begin{definition}
    Let $P_1$, $P_2$ be properties. We define the property $P_1\circ P_2$ as the property of every function $f_1\circ f_2$ with $f_i\in P_i$, $i=1,2$.
\end{definition}
\begin{definition}
    If $P$ is a property of invertible functions or morphisms, we define $P^{-1}$ as the property of every function $f^{-1}$ with $f\in P$.
\end{definition}

\begin{exemples}
    \begin{itemize}
        \item $P_{\text{cont}}$ is the set of all continuous functions; we say f is continuous if $f\in P_{\text{cont}}$.
        \item $P_{\text{aut}}$ is the set of automorphisms. One can check that $f\in P_{\text{aut}}\implies f\in P_{\text{aut}}^{-1}$.
        \item $P_{id}$ is the identity; one can check that for any other property $P_2$, we have $P_2=P_{id}\circ P_2 = P_2\circ P_{id}$.
    \end{itemize}
\end{exemples}

\begin{theorem}
    Let $F\colon T\to M$ be a forward dynamical system. The following are equivalent:
    \begin{tfae}
        \item the system admits an equilibrium where $\delta$ has property $P$;
        \item the system admits a Lyapunov function where $A$ has property $P$, and $B=\Id$.
    \end{tfae}
Furthermore, if the system admits a Lyapunov function where $A$ has property $P_1$ and $B$ has property $P_2$, then the system admits an equilibrium where $\delta$ has property $P_1\circ (P_2)^{-1}$.
\end{theorem}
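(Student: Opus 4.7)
My plan is to parallel the proofs of Theorem~\ref{thm:Lyapunov} and Theorem~\ref{thm:Lyapunovconv}, while tracking how the specified properties travel through the constructions.

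For the direction (i)$\Rightarrow$(ii), I would start from a Lyapunov equilibrium witnessed by some $\delta$ with property $P$ and reuse the explicit formula from the proof of Theorem~\ref{thm:Lyapunovconv},
\[
V^{\leq}(\epsilon)=\bigcup_{t\in T}F_{t}\bigl(B(x^*,\delta(\epsilon))\bigr).
\]
The monovariance calculation is identical to the one at the end of that proof (using only associativity of $+$, functoriality of $F$, and the fact that $t+t_i\geq 0$ whenever $t_i\geq 0$). For the rough-approximation diamond I would verify each triangle directly: the inclusion $B(x^*,\delta(\epsilon))\subseteq V^{\leq}(\epsilon)$ holds by taking $t=0$ in the union, while $V^{\leq}(\epsilon)\subseteq B(x^*,\epsilon)$ is obtained by applying the equilibrium inclusion $F_{t}(B(x^*,\delta(\epsilon)))\subseteq B(x^*,\epsilon)$ term by term inside the union. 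This identifies the inner leg of the diamond as $A=\delta$ (hence $A$ has property $P$) and the outer leg as $B=\Id$, yielding the required Lyapunov function.

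For the direction (ii)$\Rightarrow$(i) I would paste the rough-approximation diamond beneath the monovariance square, exactly as in equation~\eqref{eq:gluing}. With $B=\Id$ the appeal to Lemma~\ref{lem:betainverse} becomes trivial, and the outer cell of the pasted diagram reads $\Part(F_t)(B(x^*,A(\epsilon)))\subseteq B(x^*,\epsilon)$, so the equilibrium is witnessed by $\delta:=A$, which has property $P$ by hypothesis.

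For the \emph{furthermore} clause I would run the same pasting without specialising $B$. The outer cell then reads $\Part(F_t)(B(x^*,A(\epsilon)))\subseteq B(x^*,B(\epsilon))$; since $B$ is invertible (this is built into the definition of Lyapunov function, and is also exactly what makes $P_2^{-1}$ well-defined), substituting $\eta:=B(\epsilon)$ rewrites this as $\Part(F_t)(B(x^*,A\circ B^{-1}(\eta)))\subseteq B(x^*,\eta)$, so $\delta:=A\circ B^{-1}$ has property $P_1\circ P_2^{-1}$ and witnesses an equilibrium. The only real obstacle is bookkeeping: one must check that the $V^{\leq}$ produced from an equilibrium places the property of $\delta$ onto the inner leg $A$ and leaves the outer leg equal to $\Id$, so that the equivalence really reads with $A$ having property $P$ rather than with $B$ having property $P^{-1}$; once that matching is arranged the rest reduces to the two diagram chases already performed.
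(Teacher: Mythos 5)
Your proposal is correct and follows essentially the same route as the paper: the paper's own proof of this theorem is just a pointer back to Theorems~\ref{thm:Lyapunov} and~\ref{thm:Lyapunovconv}, and you have filled in exactly those constructions (the union $\bigcup_t F_t(B(x^*,\delta(\epsilon)))$ for the forward direction, the pasting of equation~\eqref{eq:gluing} with $\delta=A\circ B^{-1}$ for the converse) while tracking where the properties $P$, $P_1$, $P_2$ land. Your explicit identification $A=\delta$, $B=\Id$ and the substitution $\eta=B(\epsilon)$ in the \emph{furthermore} clause make the bookkeeping more transparent than the paper's own terse reference.
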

\begin{proof}
    See the proof of Theorem \ref{thm:Lyapunov}.
\end{proof}

\section{Conclusion}
We have given a definition of level-set morphism in a set-theoretical context, which is easily seen to be valid in an arbitrary topos: indeed,  we never used the excluded middle in our proofs. We explained how a morphism relates to its level-set in a way similar to an adjunction. We have then given a categorical definition of stability and Lyapunov functions. We then showed that one is in fact just a reformulation of the other, and we explained how a change in the equilibrium translates to a corresponding change in the associated Lyapunov function. This transfer is the easy half of a two-way process. We hope in  further work to describe how changes in a Lyapunov function, such as requiring $V$ to be continuous, affect the associated equilibrium, or when to expect such behaviour for Lyapunov functions.

\section{acknowledgement}
RJ is a FNRS honorary Research Associate. This project has received funding from the European Research Council (ERC) under the European Union's Horizon 2020 research and innovation programme under grant agreement No 864017 - L2C. RJ is also supported by the Innoviris Foundation and the FNRS (Chist-Era Druid-net).
\bibliographystyle{plain}
\bibliography{bibli}
\end{document}